\providecommand{\href}[2]{#2}
\providecommand*{\backref}{}
\providecommand*{\backrefalt}{}
\renewcommand*{\backref}[1]{}
\renewcommand*{\backrefalt}[4]{%
	\ifcase #1 %
	\or
	  Cited page~#2.
	\else
	  Cited pages~#2.
	\fi
}
\newcommand{\Pbb}{\mathbb{P}}
\newcommand{\E}{\mathbb{E}}
\newcommand{\N}{\mathbb{N}}
\newcommand{\R}{\mathbb{R}}
\newcommand{\dd}{\mathop{}\!\mathrm{d}}
\DeclarePairedDelimiter{\abs}{\lvert}{\rvert}
\DeclarePairedDelimiter{\norm}{\lVert}{\rVert}
\DeclareMathOperator{\Id}{Id}
\renewcommand{\epsilon}{\varepsilon}
\renewcommand{\phi}{\varphi}
\renewcommand{\leq}{\leqslant}
\renewcommand{\geq}{\geqslant}
\newtheorem{thm}{Theorem}[section]
\newtheorem{definition}[thm]{Definition}
\newtheorem{lem}[thm]{Lemma}
\newtheorem{cor}[thm]{Corollary}
\newtheorem*{prop*}{Proposition}
\theoremstyle{definition}
\numberwithin{equation}{section}
\title[Mixing limit theorems]{Variations around Eagleson's Theorem
on mixing limit theorems for dynamical systems}
\author{Sébastien Gouëzel}
\address{Laboratoire Jean Leray, CNRS UMR 6629,
Université de Nantes, 2 rue de la
Houssinière,
44322 Nantes, France}
\email{sebastien.gouezel@univ-nantes.fr}
\begin{document}

\begin{abstract}
Eagleson's Theorem asserts that, given a probability-preserving map, if
renormalized Birkhoff sums of a function converge in distribution, then
they also converge with respect to any probability measure which is
absolutely continuous with respect to the invariant one. We prove a version
of this result for almost sure limit theorems, extending results of
Korepanov. We also prove a version of this result, in mixing systems, when
one imposes a conditioning both at time $0$ and at time $n$.
\end{abstract}

\maketitle

Let $T$ be an ergodic probability-preserving transformation on a probability
space $(X,m)$. Given a measurable function $f:X \to \R$, the question of the
convergence in distribution of renormalized Birkhoff sums $S_n f =
\sum_{k=0}^{n-1} f \circ T^k$ is central in ergodic theory. In physical
situations, where there is an a priori given reference probability measure
$P$ (for instance Lebesgue measure) which maybe differs from the invariant
measure, there can be a discussion of whether it is more natural to consider
such a distributional convergence with respect to the reference measure $P$
or to the invariant measure $m$. It turns out that this question is
irrelevant when $P$ is absolutely continuous with respect to $m$, by a
theorem of Eagleson~\cite{eagleson}: it is equivalent to have the
distributional convergence of $S_n f/B_n$ towards a limit $Z$ for $m$ or for
$P$, if $B_n \to \infty$. Since then, this theorem has proved extremely
useful, and has been extended to cover more general situations, see for
instance~\cite{aaronson_asymptotic, zweimuller_mixing}. In particular,
Eagleson's result holds in non-singular maps for processes which are
asymptotically invariant in probability.

Eagleson's Theorem has in particular been used to deduce limit theorems for a
map from limit theorems for an induced transformation. An important step in
this argument is to replace the invariant measure for the induced map (which
is the restriction of the invariant measure to the inducing set) by another
measure that takes into account the return time to the set, while keeping a
limit theorem, and this is proved using Eagleson's Theorem. Recently, a
similar inducing argument has been used by Melbourne and Nicol
in~\cite{melbourne_ASIP} to prove another kind of limit theorem, called
almost sure invariance principle and asserting that the Birkhoff sums can
almost surely be coupled with trajectories of a Brownian motion, so that the
mutual difference is suitably small. However, there was a difficulty in the
proof due to the lack of an analogue of Eagleson's result in this almost sure
setting. This gap has been fixed by Korepanov in~\cite{korepanov_ASIP} using
the specificities of the class of maps studied in~\cite{melbourne_ASIP}.

Our goal in this short note is to discuss two variations around Eagleson's
Theorem. First, in Section~\ref{sec:ASIP}, we give a general argument to show
that it is always equivalent to have an almost sure limit theorem for an
invariant probability measure or for an absolutely continuous one. Then, in
Section~\ref{sec:mixing}, we discuss distributional limit theorems for $S_n
f(x)/B_n$ when one conditions both on the positions of $x$ and $T^n x$ (where
conditioning only on $x$ corresponds to Eagleson's Theorem, and conditioning
only on $T^n x$ follows from Eagleson's Theorem applied to $T^{-1}$, but
conditioning simultaneously on both positions requires a new argument). Our
proofs in this note owe a lot to~\cite{zweimuller_mixing}
and~\cite{korepanov_ASIP}.

\section{Almost sure limit theorems}

\label{sec:ASIP}

In this section, we discuss a version of Eagleson's result that applies to
almost sure limit theorems. Given two probability measures $m_1$ and $m_2$,
the goal will be to construct a coupling between these two measures that
respects the orbit structure of the space, as in~\cite{korepanov_ASIP}. Then,
it will readily follow that an almost sure limit theorem with respect to
$m_1$ implies one with respect to $m_2$. Our argument works for general maps
but, contrary to~\cite{korepanov_ASIP}, our results are not quantitative. The
suitable definition of coupling we use is the following.

\begin{definition}
Let $(X, T)$ be a measurable map on a measurable space. A coupling along
orbits between two probability measures $m_1$ and $m_2$ (or more generally
between two finite measures of the same mass) is a measure $\rho$ on $X
\times X$ whose marginals are respectively $m_1$ and $m_2$, and such that,
for $\rho$ almost every $(x_1,x_2)$, there exist $n_1$ and $n_2$ with
$T^{n_1}x_1 = T^{n_2} x_2$. If there exists such a coupling, we say that
$m_1$ and $m_2$ can be coupled along orbits.
\end{definition}

Our goal is to show the following theorem:

\begin{thm}
\label{thm:coupling} Let $(X, T)$ be a measurable map on a standard
measurable space. Consider a $\sigma$-finite measure $\mu$ for which $T$ is
non-singular and ergodic. Let $m_1$ and $m_2$ be two probability measures
that are absolutely continuous with respect to $\mu$. Then they can be
coupled along orbits.
\end{thm}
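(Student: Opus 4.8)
The plan is to construct the coupling by a careful "inductive cutting" argument that matches pieces of $m_1$ with pieces of $m_2$ along orbits, using ergodicity of $T$ with respect to $\mu$ as the engine that guarantees the procedure eventually exhausts both measures.

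First I would reduce to a cleaner situation. Since the space is standard and $\mu$ is $\sigma$-finite, after an at most countable partition I may pretend $\mu$ is a probability measure (or work on each piece separately, then sum), and replace the densities $\df m_i/\dd\mu$ by bounded ones by a further exhaustion. The key elementary observation is this: if $A$ is a set of positive $\mu$-measure, then by ergodicity $\bigcup_{n\geq 0} T^{-n} A$ has full $\mu$-measure, so $\mu$-almost every point $x$ (and hence $m_i$-almost every point) eventually enters $A$ under iteration. More usefully, I want a statement about pushing mass forward: for a finite measure $\nu \ll \mu$, the sequence of pushforwards $T^n_* \nu$ spreads out, and one can harvest, at each time $n$, the part of $T^n_*\nu$ that "agrees" with the target. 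The cleanest way to organize this is to build, simultaneously for $m_1$ and $m_2$, stopping-time-like functions $n_1, n_2$ on the respective spaces together with a common "meeting measure" on $X$.

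The central construction I would carry out: find a measure $\lambda$ on $X$ and measurable maps (or rather, a disintegration) realizing, for each $x$ in a full $m_1$-measure set, a time $n_1(x)$ such that the distribution of $T^{n_1(x)} x$ under $m_1$ equals $\lambda$, and similarly a time $n_2(y)$ for $m_2$ with distribution of $T^{n_2(y)} y$ also equal to $\lambda$; then one couples $m_1$ and $m_2$ by first coupling their orbits through the common intermediate point, i.e. glue the disintegration of $m_1$ over $\{T^{n_1} x = z\}$ with that of $m_2$ over $\{T^{n_2} y = z\}$ and integrate over $z \sim \lambda$. Concretely I would do this greedily: set $\lambda_0 = 0$; having consumed part of $m_1$ and $m_2$, look at the remaining masses pushed forward one more step, take the "minimum" (infimum of the two pushforward measures, in the lattice of measures) of the not-yet-matched parts, add that common overlap to $\lambda$, and record the corresponding stopping times. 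The amount matched at each stage is nonnegative, and I must show the total matched mass converges to $1$.

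The main obstacle — and the step I expect to require the real work — is proving that this greedy matching actually exhausts all the mass, i.e. that the leftover (unmatched) masses tend to $0$. This is where ergodicity has to be used quantitatively rather than just qualitatively: one needs that the forward pushforwards of the two (renormalized) leftover measures cannot stay mutually singular forever. I would argue by contradiction: if a positive amount of $m_1$-mass, carried by a set $B_1$ of points, never matches, then by a Borel–Cantelli / ergodic-theorem argument the orbits from $B_1$ must spend a definite fraction of time in any fixed positive-measure region, and similarly for the leftover $m_2$-mass carried by $B_2$; choosing the region well (using that both leftover measures are absolutely continuous with controlled densities, via Egorov/Lusin to get uniform control on large subsets) forces the forward images to overlap with positive mass, contradicting maximality of the matching. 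An alternative, possibly cleaner route, is to exploit that the leftover measures, being $T$-invariant in the limit in an appropriate sense, must be absolutely continuous and $T$-invariant, hence by ergodicity proportional to each other, so they match — contradiction unless they vanish. Once exhaustion is established, assembling the countably many matched pieces into a single measure $\rho$ on $X\times X$ with the correct marginals and with $T^{n_1} x_1 = T^{n_2} x_2$ holding $\rho$-a.e. is routine bookkeeping, and handling the reductions ($\sigma$-finite to probability, unbounded to bounded densities) is standard.
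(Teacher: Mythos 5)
Your overall architecture (greedy matching of pushed-forward mass, gluing through a common meeting measure, geometric exhaustion of the leftovers) has the right shape, but the engine you propose for producing overlap is broken, and it breaks precisely at the step you yourself flag as ``the real work.'' At stage $n$ you harvest the infimum $(T^n_*\mu_1^{(n)}) \wedge (T^n_*\mu_2^{(n)})$ of the two leftover measures pushed forward to the \emph{same} time $n$. When $T$ is invertible and non-singular, pushing forward by $T^n$ preserves mutual singularity, so this infimum can vanish for every $n$: take $T$ an irrational rotation with $\mu$ Lebesgue, $m_1$ normalized Lebesgue on $[0,1/2)$ and $m_2$ normalized Lebesgue on $[1/2,1)$; then $T^n_*m_1$ and $T^n_*m_2$ are carried by disjoint arcs for all $n$, and your greedy procedure never matches any mass at all. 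The claim underlying both of your proposed exhaustion arguments --- that the forward pushforwards of two absolutely continuous measures of equal mass ``cannot stay mutually singular forever'' --- is therefore false in general. Your second alternative is not available either: nothing in the construction makes the leftover measures asymptotically $T$-invariant, and $\mu$ need not even admit an equivalent invariant measure.

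The missing idea is to average over the time of arrival instead of insisting that the two points meet at the same deterministic time. The paper first couples $f_i\dd\mu$ along orbits with the Ces\`aro average $\frac{1}{n}\sum_{k=0}^{n-1}\hat T^k f_i\dd\mu$ (different pieces of mass travel for different numbers of steps, which is permitted since the definition only asks for $T^{n_1}x_1=T^{n_2}x_2$ with possibly distinct $n_1,n_2$), and then invokes Yosida's mean ergodic theorem for non-singular ergodic maps: $\frac{1}{n}\sum_{k=0}^{n-1}\hat T^k(f_1-f_2)\to 0$ in $L^1(\mu)$. Hence for a suitable fixed $n$ the two time-averaged densities overlap with mass at least $1/2$; one couples that overlap by transitivity and iterates, halving the uncoupled mass at each round. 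In the rotation example this is exactly what saves the day: the orbit of $[0,1/2)$ equidistributes, so its time-averaged pushforward overlaps substantially with that of $[1/2,1)$ even though no pair of single-time pushforwards do. If you replace your single-time infimum by this time-averaged one (and justify the mean ergodic theorem in the non-singular setting, where Birkhoff's theorem is not directly available), your argument goes through; as written, it does not.
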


Before proving the theorem, let us discuss the application to almost sure
limit theorems. An almost sure limit theorem with rate $r(n)$ between two
processes $(Z^1_n)_{n\in \N}$ and $(Z^2_n)_{n\in \N}$ defined on two
probability spaces $(\Omega_1, \Pbb_1)$ and $(\Omega_2, \Pbb_2)$ is a
coupling between these two processes, i.e., a measures $\Pbb$ on $\Omega_1
\times \Omega_2$ whose marginals are $\Pbb_1$ and $\Pbb_2$, such that for
$\Pbb$ almost every $\omega= (\omega_1,\omega_2)$, one has
$d(Z^1_n(\omega_1), Z^2_n(\omega_2)) = o(r(n))$. The most classical instance
of such a theorem is the almost sure invariance principle, asserting that the
Birkhoff sums $Z^1_n = S_n f$ can be coupled with the trajectories $Z_n^2$ of
a Brownian motion at integer times, where the error rate $r$ depends on the
problem under study.

\begin{cor}
Let $T$ be a probability-preserving ergodic map on a space $(X,m)$. Let $f: X
\to \R$ be measurable. Assume that the Birkhoff sums $S_n f$ satisfy an
almost sure limit theorem with rate $r$ for the measure $m$: they can be
coupled with a process $W_n$ such that, almost surely, $\abs{S_n f - W_n} =
o(r(n))$. Let $m'$ be a probability measure which is absolutely continuous
with respect to $m$. Assume moreover that, $m$-almost surely, $f(T^n x) =
o(r(n))$. Then $S_n f$ can also be coupled with $W_n$ for the measure $m'$,
with the same almost sure rate $r$.
\end{cor}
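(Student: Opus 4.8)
The plan is to route the given coupling of $S_n f$ with $W_n$ over $m$ through a coupling along orbits of $m$ and $m'$, and then to compare Birkhoff sums along two orbits that eventually merge. First, apply Theorem~\ref{thm:coupling} with $\mu = m$ (which is $\sigma$-finite, and for which $T$, being measure-preserving and ergodic, is non-singular and ergodic) and with $m_1 = m$, $m_2 = m'$, both absolutely continuous with respect to $m$. This produces a coupling along orbits $\rho$ on $X \times X$, with marginals $m$ and $m'$, such that for $\rho$-almost every $(x_1, x_2)$ there are $n_1, n_2 \in \N$ with $T^{n_1} x_1 = T^{n_2} x_2$. Separately, the hypothesis furnishes a coupling $Q$ on $X \times \Omega$, with marginals $m$ and the law of $(W_n)_n$, such that $Q$-a.s. $\abs{S_n f(x) - W_n(\omega)} = o(r(n))$.

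Next, glue $\rho$ and $Q$ along their common marginal $m$ on the first factor. Since all spaces are standard, disintegrate $\rho$ and $Q$ over that factor, as $\rho = \int_X \rho_x \dd m(x)$ with $\rho_x$ a probability on the second copy of $X$ and $Q = \int_X Q_x \dd m(x)$ with $Q_x$ a probability on $\Omega$, and let $\Pbb$ be the probability on $X \times X \times \Omega$ whose disintegration over the first factor is $x \mapsto \rho_x \otimes Q_x$. By construction the marginal of $\Pbb$ on the first two factors is $\rho$, its marginal on the first and third factors is $Q$, its marginal on the second factor is $m'$, and its marginal on $\Omega$ is the law of $(W_n)_n$. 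Let $\Pbb'$ denote the marginal of $\Pbb$ on the second and third factors; this is a coupling between $m'$ and the law of $(W_n)_n$, and the claim is that it realizes the desired almost sure limit theorem.

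To see this, write a generic point of $X \times X \times \Omega$ as $(x_1, x_2, \omega)$. Three events carry full $\Pbb$-measure: $T^{n_1} x_1 = T^{n_2} x_2$ for suitable $n_1, n_2$ (pulled back from $\rho$); $\abs{S_n f(x_1) - W_n(\omega)} = o(r(n))$ (pulled back from $Q$); and $x_1$ lies in the $m$-full-measure set on which $f(T^n \cdot) = o(r(n))$. Fix such a point, set $y = T^{n_1} x_1 = T^{n_2} x_2$ and $N = \max(n_1, n_2)$. For every $n \geq N$,
\[ S_n f(x_1) - S_n f(x_2) = \bigl( S_{n_1} f(x_1) - S_{n_2} f(x_2) \bigr) + \bigl( S_{n - n_1} f(y) - S_{n - n_2} f(y) \bigr), \]
where the first parenthesis is independent of $n$ and the second is a signed sum of at most $\abs{n_1 - n_2}$ terms $f(T^\ell y)$ with $n - N \leq \ell < n$. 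Each such term equals $f(T^{\ell + n_1} x_1)$, which is $o(r(\ell + n_1))$ with $\ell + n_1$ at bounded distance $\leq N$ from $n$; under the mild regularity on $r$ present in all applications (for instance $r$ eventually non-decreasing with $r(n) \to \infty$), both the constant term and these boundary terms are $o(r(n))$, so $\abs{S_n f(x_1) - S_n f(x_2)} = o(r(n))$. Combining with $\abs{S_n f(x_1) - W_n(\omega)} = o(r(n))$ via the triangle inequality yields $\abs{S_n f(x_2) - W_n(\omega)} = o(r(n))$, first $\Pbb$-almost surely and then, by projection, $\Pbb'$-almost surely.

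The real content of the statement is Theorem~\ref{thm:coupling}; granting it, the gluing in the second paragraph is routine on standard spaces, and the only point requiring genuine care is the orbit comparison above — it is exactly to control the finitely many boundary terms $f(T^\ell y)$ there that the hypothesis $f(T^n x) = o(r(n))$ has been imposed.
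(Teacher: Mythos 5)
Your proof is correct and follows essentially the same route as the paper: invoke Theorem~\ref{thm:coupling} to couple $m$ and $m'$ along orbits, glue this with the given coupling of $S_n f$ and $W_n$, and control $S_n f(x_1)-S_n f(x_2)$ by a constant plus boundary terms $f(T^\ell y)$ handled via the hypothesis $f(T^n x)=o(r(n))$. You are merely more explicit than the paper about the gluing by disintegration and about the mild regularity on $r$ (e.g.\ $r(n+j)=O(r(n))$ and $r(n)\to\infty$) that the paper's own one-line estimate also tacitly uses.
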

The growth assumption is for instance satisfied if $f$ is bounded and $r(n)$
tends to infinity, or if $f \in L^p$ and $r(n) = n^{1/p}$ (by Birkhoff
theorem applied to $\abs{f}^p$). These are the most typical situations in
applications.
\begin{proof}
It suffices to construct a coupling between $m$ and $m'$ such that, for
almost all $(x,y)$ for this coupling, one has $S_n f(x) - S_n f(y) =
o(r(n))$. We use the coupling along orbits given by
Theorem~\ref{thm:coupling}. In this case, almost every $(x,y)$ satisfy
$T^{k_1} x = T^{k_2} y$ for some $k_1, k_2$. Let $z=T^{k_1} x$. Let us prove
that, almost surely, $S_n f(x) = S_n f(z) + o(r(n))$ and $S_n f(y) = S_n f(z)
+ o(r(n))$, from which the result follows. It suffices to prove the first
estimate. For this, we note that $S_n f(z) - S_n f(x) = S_{k_1} f(T^n x) -
S_{k_1} f(x)$. The second term is constant, while the first one grows almost
surely at most like $o(r(n))$ under the assumptions of the corollary.
\end{proof}

Let us now turn to the proof of Theorem~\ref{thm:coupling}. Note first that,
if $m_1$ and $m_2$ can be coupled along orbits, as well as $m_2$ and $m_3$,
then it follows that $m_1$ and $m_3$ can also be coupled along orbits (this
follows from the composition of couplings theorem,
see~\cite[Lemma~A.1]{korepanov_ASIP}).

Note that there is no invariance assumption in the theorem for the measure
$\mu$, and that it does not have to be finite (although one can always assume
that $\mu$ is a probability measure, by replacing it with an equivalent
probability measure if necessary). However, in the applications we have in
mind, $\mu$ will typically be a probability measure, invariant under $T$. The
fact that the invariance is not relevant for this kind of theorem was pointed
out by Zweimüller in~\cite{zweimuller_mixing}: he was able to replace the use
of Birkhoff theorem by a variant which is valid without invariance, due to
Yosida. Denote by $\hat T : L^1(\mu) \to L^1(\mu)$ the transfer operator,
i.e., the predual of the composition by $T$ on $L^\infty$: it satisfies $\int
f\cdot g \circ T \dd\mu = \int \hat T f \cdot g \dd\mu$ for all $f\in
L^1(\mu)$ and $g\in L^\infty(\mu)$. Yosida's
Theorem~\cite[Theorem~2]{zweimuller_mixing} is the following:

\begin{thm}
Let $(X, T)$ be a measurable map on a measurable space. Consider a
$\sigma$-finite measure $\mu$ for which $T$ is non-singular and ergodic.
Then, for any $w \in L^1(\mu)$ with zero average, $\frac{1}{n}
\sum_{k=0}^{n-1} \hat T^k w$ tends to $0$ in $L^1(\mu)$.
\end{thm}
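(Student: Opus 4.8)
The plan is to run the standard mean ergodic theorem (in its Yosida--Kakutani form, valid on the non-reflexive space $L^1$) for the operator $\hat T$ on $L^1(\mu)$, together with ergodicity. First I would record the soft facts that make the machine turn. The transfer operator is positive and $L^1$-contracting: taking the supremum over $g\in L^\infty(\mu)$ with $\abs{g}\leq 1$ in the defining identity $\int \hat T f\cdot g\dd\mu = \int f\cdot g\circ T\dd\mu$ gives $\norm{\hat T f}_{L^1}\leq \norm{f}_{L^1}$, and taking $g\equiv 1$ gives $\int \hat T f\dd\mu = \int f\dd\mu$. Hence the Cesàro averages $A_n = \frac1n\sum_{k=0}^{n-1}\hat T^k$ are uniformly bounded on $L^1(\mu)$, so the set $C=\{w\in L^1(\mu) : A_n w\to 0 \text{ in } L^1\}$ is a closed linear subspace. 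It contains every coboundary $w=g-\hat T g$, because then $A_n w = \frac1n(g-\hat T^n g)$ has norm at most $2\norm{g}_{L^1}/n$; and conversely $C$ is contained in $L^1_0(\mu)=\{w : \int w\dd\mu = 0\}$, since $\int A_n w\dd\mu = \int w\dd\mu$ for every $n$ by the integral-preservation above. So the whole statement reduces to the single identity
\[
  \overline{\{\,g-\hat T g : g\in L^1(\mu)\,\}}^{\,L^1} = L^1_0(\mu),
\]
after which $C=L^1_0(\mu)$, which is exactly the claimed convergence.

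I would prove this identity by duality. Both sides are closed subspaces of $L^1(\mu)$, with $L^1(\mu)^*=L^\infty(\mu)$, so by Hahn--Banach a closed subspace is the pre-annihilator of its annihilator, and it suffices to check that the two subspaces have the same annihilator in $L^\infty(\mu)$. The annihilator of $L^1_0(\mu)$ is the line of constant functions: if $\psi\in L^\infty(\mu)$ satisfies $\int w\psi\dd\mu = 0$ for all $w$ with $\int w\dd\mu = 0$, then testing on $w=\mathbf{1}_A - \frac{\mu(A)}{\mu(B)}\mathbf{1}_B$ for finite-measure sets $A,B$ (which exist by $\sigma$-finiteness) shows that the average of $\psi$ over every finite-measure set equals a fixed constant, hence $\psi$ is a.e. constant; the converse is trivial. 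For the coboundary space, $\psi$ lies in its annihilator iff $\int \hat T g\cdot\psi\dd\mu = \int g\cdot\psi\dd\mu$ for all $g\in L^1(\mu)$; rewriting the left side via the transfer-operator identity as $\int g\cdot(\psi\circ T)\dd\mu$ and testing against indicators of finite-measure sets, this is equivalent to $\psi\circ T=\psi$ $\mu$-a.e. By ergodicity of $T$ (invariant sets, hence invariant $L^\infty$ functions, are $\mu$-trivial) such a $\psi$ is a.e. constant. Thus both annihilators equal the line of constants, and the identity follows.

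The one genuine input is that last step — $T$-invariant bounded functions are constant — which is where ergodicity (and $\sigma$-finiteness, to pass between "$\mu$-a.e. equality" and "equality after testing against $L^1$") is really used; note that invariance of $\mu$ plays no role, which is the point Zweimüller emphasizes. Everything else is the soft skeleton of the mean ergodic theorem: uniform Cesàro boundedness produces a closed "good subspace", coboundaries fall into it for free, and Hahn--Banach pins down its size. Pleasantly, the argument never needs a $\hat T$-fixed point, so it works even when $\mu$ is infinite and no invariant density exists. The only other thing I would pause on is the standing well-definedness of $\hat T:L^1(\mu)\to L^1(\mu)$, which requires non-singularity (so that $\mu$-null sets pull back to $\mu$-null sets and the Radon--Nikodym derivative defining $\hat T f$ exists); this is built into the hypotheses and I would simply recall it.
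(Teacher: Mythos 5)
This statement is not proved in the paper: it is quoted verbatim as Yosida's Theorem from Zweimüller's article (cited as \cite[Theorem~2]{zweimuller_mixing}) and used as a black box. So there is no in-paper argument to compare against; what matters is whether your self-contained proof is sound, and it is. Your route is the ``dual'' form of the mean ergodic theorem, which cleverly sidesteps the usual obstruction that $L^1(\mu)$ is not reflexive (so one cannot invoke weak compactness of the Ces\`aro orbit, which is the standard engine of the Yosida--Kakutani theorem, and which genuinely fails when $\mu$ is infinite). Instead of decomposing all of $L^1$, you only need the single identity $\overline{\{g-\hat T g\}}=L^1_0(\mu)$, and you get it from the bipolar theorem for closed subspaces --- which needs only Hahn--Banach, not reflexivity --- by computing both annihilators in $L^\infty(\mu)$ and finding the constants in each case. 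All the supporting steps check out: $\hat T$ is a well-defined $L^1$-contraction preserving integrals (non-singularity is exactly what makes $\hat T$ well defined, and also what guarantees $|g|\leq 1$ a.e.\ implies $|g\circ T|\leq 1$ a.e.\ in the duality bound); the set $C$ of good vectors is a closed subspace containing the coboundaries; and $\sigma$-finiteness is used correctly both to identify $L^1(\mu)^*$ with $L^\infty(\mu)$ and to pass from ``zero integral over every finite-measure set'' to ``zero a.e.''. The only point I would ask you to make explicit is the very last use of ergodicity: from $\psi\circ T=\psi$ $\mu$-a.e.\ you need $\psi$ constant, and since ergodicity is phrased in terms of strictly invariant sets, one should invoke the standard fact that for a non-singular map every $\mu$-almost invariant set agrees mod $\mu$ with a strictly invariant set. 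This is routine and does not affect the correctness of the argument.
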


To prove Theorem~\ref{thm:coupling}, we will couple increasingly complicated
measures, relying ultimately on Yosida's Theorem. For starters, we begin with
a result that should be obvious.

\begin{lem}
\label{lem:coupling_Snf} Consider an integrable $f \geq 0$, and $n\geq 1$.
Then $f\dd\mu$ and $\hat T f \dd\mu$ can be coupled along orbits.
\end{lem}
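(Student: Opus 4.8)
The plan is to use the most obvious candidate for a coupling, namely (the pushforward of a measure along) the graph of the iterate $T^n$. Define $\rho$ to be the image of the finite measure $f\dd\mu$ under the measurable map $\Phi\colon X \to X\times X$, $\Phi(x) = (x, T^n x)$. This map is measurable because $T$ is and $X\times X$ carries the product $\sigma$-algebra, so $\rho$ is a well-defined finite measure on $X\times X$.

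The first thing to check is that $\rho$ has the right marginals. Its first marginal is the image of $f\dd\mu$ under the first coordinate of $\Phi$, i.e.\ under the identity, hence it is $f\dd\mu$. For the second marginal, for every $g\in L^\infty(\mu)$ one has $\int g \dd\bigl((T^n)_*(f\dd\mu)\bigr) = \int (g\circ T^n)\, f \dd\mu = \int g\cdot \hat T^n f \dd\mu$, where the last step applies the defining identity of the transfer operator $n$ times (equivalently, the transfer operator of $T^n$ is $\hat T^n$, by an immediate induction). Thus the second marginal is $\hat T^n f \dd\mu$. In particular both marginals are finite with the same total mass $\int f \dd\mu$ (since $\int \hat T^n f \dd\mu = \int f \dd\mu$), so $\rho$ is a coupling in the sense of the Definition between the two measures in question.

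It then remains to verify the orbit condition, which is built into the construction: $\rho$ is carried by the set $\{(x, T^n x) : x \in X\}$, and for any such pair the choice $n_1 = n$, $n_2 = 0$ gives $T^{n_1} x = T^n x = T^{n_2}(T^n x)$. Hence for $\rho$-almost every $(x_1,x_2)$ there are $n_1,n_2$ with $T^{n_1}x_1 = T^{n_2}x_2$, which is exactly what the definition of coupling along orbits demands. There is no real obstacle here; the only mild point of attention is the measurability of $\Phi$ and the fact that images of measures and product $\sigma$-algebras behave well on standard measurable spaces, both of which are routine.
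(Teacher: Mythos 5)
Your proof is correct and is essentially the paper's own argument: the paper takes $\rho = (\Id, T)_*(f\dd\mu)$ and observes that its marginals are $f\dd\mu$ and $T_*(f\dd\mu) = \hat T f\dd\mu$, with the orbit condition automatic on the graph of $T$. You read the statement as concerning $\hat T^n f$ (which is what the otherwise unused hypothesis $n\geq 1$ suggests was intended) and push forward along the graph of $T^n$ instead; this contains the stated case $n=1$ and directly yields the remark following the lemma without invoking transitivity of couplings.
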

\begin{proof}
While this looks obvious, it is enlightening to write down the details, to
understand what a coupling is. We let $\rho = (\Id, T)_* (f \dd\mu)$. The
first marginal of $\rho$ is $f \dd\mu$, while the second one is $T_*(f
\dd\mu) = \hat T f\dd\mu$, as desired.
\end{proof}
It follows from this lemma that $f\dd\mu$ and $\hat T^j f \dd\mu$ can be
coupled along orbits. Averaging, one gets the same result for $f\dd\mu$ and
$\frac{1}{n}\sum_{j=0}^{n-1}\hat T^j f \dd\mu$.

\begin{lem}
\label{lem:couple_part_mass} Consider two probability measures $m_1$ and
$m_2$ which are absolutely continuous with respect to $\mu$. Then there exist
two nonnegative measures $p_1 \leq m_1$ and $p_2 \leq m_2$, of mass $\geq
1/2$, that can be coupled along orbits.
\end{lem}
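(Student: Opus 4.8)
The plan is to average the densities of $m_1$ and $m_2$ under the transfer operator until they become close in $L^1$, extract their common part, and pull that part back along the couplings supplied by Lemma~\ref{lem:coupling_Snf}. Write $m_i = f_i \dd\mu$ with $f_i \geq 0$ integrable of integral $1$, and set $w = f_1 - f_2$, which has zero average. For $n \geq 1$ let $g_i^{(n)} = \frac{1}{n}\sum_{k=0}^{n-1} \hat T^k f_i$; since $\hat T$ preserves integrals, $g_i^{(n)} \dd\mu$ is again a probability measure, and by Lemma~\ref{lem:coupling_Snf} together with the averaging remark following it, $m_i$ can be coupled along orbits with $g_i^{(n)} \dd\mu$, say via a coupling $\rho_i$. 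By Yosida's Theorem, $\norm{g_1^{(n)} - g_2^{(n)}}_{L^1(\mu)} = \norm{\frac{1}{n}\sum_{k=0}^{n-1} \hat T^k w}_{L^1(\mu)}$ tends to $0$ as $n \to \infty$; fix $n$ with this quantity $\leq 1$, and let $q = \min(g_1^{(n)}, g_2^{(n)}) \dd\mu$. Then $q \leq g_i^{(n)}\dd\mu$ for $i = 1, 2$, and since $g_1^{(n)}\dd\mu$ and $g_2^{(n)}\dd\mu$ are probability measures, the total mass of $q$ equals $1 - \frac{1}{2}\norm{g_1^{(n)} - g_2^{(n)}}_{L^1(\mu)} \geq 1/2$.

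It remains to transfer $q$ back through the $\rho_i$. The mechanism is the following restriction principle: if $\rho$ is a coupling along orbits between finite measures $m$ and $\nu$ (of equal mass) and $q \leq \nu$, then, writing $h = \dd q / \dd \nu \in [0,1]$ and $\rho' = h(x_2)\dd\rho(x_1, x_2)$, the measure $\rho'$ is a coupling along orbits between $p := (\pi_1)_* \rho'$ and $q$, and $p \leq m$. Indeed the second marginal of $\rho'$ is $h \cdot \nu = q$; its first marginal is $\leq m$ because $h \leq 1$; and it is concentrated on orbit-related pairs since $\rho' \ll \rho$. Applying this to $\rho_1$ and to $\rho_2$ gives couplings along orbits between $p_1 \leq m_1$ and $q$, and between $p_2 \leq m_2$ and $q$, where $p_1$, $p_2$ and $q$ all share the same mass $\geq 1/2$. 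Composing these two couplings — by the composition of couplings theorem \cite[Lemma~A.1]{korepanov_ASIP}, valid for finite measures of equal mass as recalled in the transitivity remark above — produces a coupling along orbits between $p_1$ and $p_2$, which is what we want.

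I do not expect a serious obstacle in this argument; the only step requiring a little care is the restriction principle, namely verifying that multiplying a coupling along orbits by a $[0,1]$-valued function of the second coordinate yields again a coupling along orbits, now with second marginal $q$ and first marginal dominated by $m$. This is immediate from the definitions and, as with the composition of couplings, needs nothing about the measurable space beyond the standing assumption that it is standard.
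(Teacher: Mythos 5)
Your argument is correct and is essentially the paper's own proof: average the densities with $\hat T$, invoke Yosida's Theorem to make the averaged densities close in $L^1$, take their minimum as the common part $q$ of mass $\geq 1/2$, restrict the couplings from Lemma~\ref{lem:coupling_Snf} by the density of $q$ with respect to the second marginal, and conclude by composition of couplings. The only (cosmetic) difference is that you phrase the restriction step abstractly via $h=\dd q/\dd\nu$, where the paper writes the explicit factor $\frac{G_n(y)}{F_{1,n}(y)}1_{F_{1,n}(y)>0}$.
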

\begin{proof}
Denote by $f_1$ and $f_2$ the respective densities of $m_1$ and $m_2$ with
respect to $\mu$. Let $F_{i,n} = \frac{1}{n} \sum_{k=0}^{n-1} \hat T^k f_i$
for $i=1,2$ and $n > 0$. Let also $G_n(x) = \min(F_{1,n}(x), F_{2,n}(x))$. By
Yosida's Theorem, $\int \abs{F_{1,n} - F_{2,n}} \dd\mu$ tends to $0$. As
$G_n(x) = \frac{F_{1,n}(x)+F_{2,n}(x)- \abs{F_{1,n}(x) - F_{2,n}(x)}}{2}$, we
deduce that $\int G_n(x) \dd\mu \to 1$. In particular, we may choose $n$ such
that $\int G_n \dd\mu \geq 1/2$.

Consider a coupling along orbits $\rho$ between $m_1 = f_1 \dd\mu$ and $\nu_1
= F_{1,n} \dd\mu$, by Lemma~\ref{lem:coupling_Snf}. Define a new measure on
$X\times X$ by
\begin{equation*}
  \dd\tilde\rho(x,y) = \frac{G_n(y)}{F_{1,n}(y)} 1_{F_{1,n}(y)>0} \dd\rho.
\end{equation*}
As $G_n \leq F_{1,n}$ everywhere, one has $\tilde \rho \leq \rho$. The second
marginal of $\tilde \rho$ is the measure $G_n \dd\mu$ by construction, of
mass $\geq 1/2$. Hence, the first marginal of $\tilde\rho$ is a measure $p_1$
of mass at least $1/2$, dominated by the first marginal $m_1$ of $\rho$.
Moreover, by construction, $p_1$ is coupled along orbits with $G_n \dd\mu$.

In the same way, we obtain a measure $p_2 \leq m_2$ which is coupled along
orbits with $G_n \dd\mu$. Finally, $p_1$ and $p_2$ can be coupled along
orbits by transitivity. They satisfy the conclusion of the lemma.
\end{proof}

\begin{proof}[Proof of Theorem~\ref{thm:coupling}]
Start from two probability measures $m_1 = m_1^{(0)}$ and $m_2 = m_2^{(0)}$
that we want to couple along orbits. By Lemma~\ref{lem:couple_part_mass},
there exists a coupling $\rho_0$ along orbits between parts $p_1^{(0)}$ and
$p_2^{(0)}$ of mass at least $1/2$ of respectively $m_1^{(0)}$ and
$m_2^{(0)}$. Let $m_i^{(1)} = m_i^{(0)} - p_i^{(0)}$ be the uncoupled parts,
they have the same mass $\leq 1/2$. Applying Lemma~\ref{lem:couple_part_mass}
to these two measures, we obtain a coupling $\rho_1$ between parts
$p_i^{(1)}$ of these measures, leaving parts $m_i^{(2)}$ uncoupled, with mass
$\leq 1/4$. Iterate this process. Then $\rho = \sum \rho_i$ is the desired
coupling along orbits between $m_1$ and $m_2$.
\end{proof}

\section{Mixing transformations}

\label{sec:mixing}

Let $T$ be an ergodic map preserving a probability measure $m$. Eagleson's
Theorem ensures that, if $S_n f/B_n$ converges in distribution with respect
to $m$ towards a random variable $Z$, and $B_n\to \infty$, then this
convergence also holds with respect to any probability measure $m'$ which is
absolutely continuous with respect to $m$. We want to see what happens when
we condition on the position at two moments of time. A typical example is to
fix two sets $Y_1$ and $Y_2$ and only consider those trajectories that start
at time $0$ in $Y_1$ and end at time $n$ in $Y_2$. Conditioning at time $0$
is Eagleson's Theorem, conditioning at time $n$ follows from Eagleson's
Theorem applied in the natural extension and a change of variables, but the
simultaneous conditioning requires a new argument. When the map is mixing, we
prove that there is indeed such a limit theorem.

\begin{thm}
\label{thm:mixing_Eagleson} Let $T$ be an ergodic probability-preserving map
on $(X,m)$. Assume that $T$ is mixing. Let $f:X \to \R$ be a measurable
function such $S_n f/B_n$ converges in distribution to a real random variable
$Z$, where $B_n \to \infty$. Let $\phi_1, \phi_2 : X \to \R$ be two
nonnegative square integrable functions with $\int \phi_1 = \int \phi_2 = 1$.
Define a sequence of measures $m_n$ by $m_n(U) = \int_U \phi_1 \cdot \phi_2
\circ T^n \dd m$. They satisfy $m_n(X) \to 1$ by mixing. Then the random
variables $S_n f/B_n$ on the probability spaces $(X, m_n/m_n(X))$ converge in
distribution to $Z$.
\end{thm}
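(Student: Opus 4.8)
The plan is to reduce the two-sided conditioning to Eagleson's Theorem (one-sided conditioning) by splitting the Birkhoff sum at a midpoint. Write $S_n f = S_{n/2} f + S_{n/2} f \circ T^{n/2}$ (say $n$ even, which costs nothing). Under the measure $m_n$, the first half $S_{n/2} f$ sees a conditioning on the position at time $0$ (via $\phi_1$) but, as $n\to\infty$, the conditioning at time $n$ is "far in the future" and should become irrelevant by mixing; symmetrically, the second half $S_{n/2}f \circ T^{n/2}$ sees a conditioning at time $n$ via $\phi_2 \circ T^n$, with the conditioning at time $0$ now far in the past. The heuristic is that under $m_n$, the pair $(S_{n/2}f, \; S_{n/2}f\circ T^{n/2})$ behaves asymptotically like two \emph{independent} copies, each distributed as $S_{n/2}f$ under a one-sided conditioned measure; so $S_nf/B_n$ is like the sum of two independent variables, each converging (after renormalization) to the limit obtained from Eagleson's Theorem.

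There is a subtlety: $B_{n/2}$ need not relate to $B_n$ in any controlled way, and $Z$ need not be stable, so "$S_n f/B_n$ is a sum of two independent rescaled copies of $Z$" is not literally what we want. The clean fix is to work at the level of characteristic functions and avoid decomposing $Z$ itself. Fix $t\in\R$ and set $g_n = e^{\ic t S_n f/B_n}$, a bounded function of modulus $1$. We must show $\int g_n \dd m_n \to \E[e^{\ic t Z}]$. Now use that $S_nf$ is, up to an error tending to $0$ in the relevant sense, "asymptotically $T$-invariant": more precisely, Eagleson-type arguments (as in \cite{zweimuller_mixing}) show that for any fixed $k$, $S_nf/B_n$ and $S_nf\circ T^k/B_n$ have the same limit, and more globally $S_nf/B_n$ is asymptotically invariant in probability. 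The key replacement is $S_nf \circ T^{n/2} = S_{n/2}f \circ T^{n/2} + S_{n/2} f \circ T^n$, hence $S_n f = S_{n/2} f + S_{n/2}f\circ T^{n/2}$ exactly; but I want to compare $S_nf/B_n$ under $m_n$ with $S_nf/B_n$ under $m$. Write
\begin{equation*}
  \int e^{\ic t S_n f/B_n}\, \phi_1\cdot(\phi_2\circ T^n)\dd m
  = \int \left(e^{\ic t S_n f/B_n}\phi_1\right)\cdot (\phi_2\circ T^n)\dd m.
\end{equation*}
The function $h_n \coloneqq e^{\ic t S_nf/B_n}\phi_1$ is, however, not asymptotically invariant (it contains the genuinely time-$0$ factor $\phi_1$), so a direct mixing argument against $\phi_2\circ T^n$ does not immediately separate the two factors.

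The right mechanism, following \cite{zweimuller_mixing}, is a two-step reduction. First, using Eagleson's Theorem for $T$, the measure $\phi_1\dd m$ is absolutely continuous with respect to $m$, so $S_nf/B_n$ converges in distribution to $Z$ under $\phi_1\dd m$; more is true by Zweimüller's method — convergence holds \emph{jointly} with any fixed test function, i.e., for $\psi\in L^\infty$, $\int \psi\cdot e^{\ic t S_nf/B_n}\,\phi_1\dd m \to (\int\psi\,\phi_1\dd m)\cdot\E[e^{\ic t Z}]$, because $e^{\ic t S_nf/B_n}$ becomes asymptotically independent of the "local" coordinate. Granting that, apply it with $\psi = \E_m[\phi_2\circ T^n \mid \mathcal{G}]$ for a suitable finite $\sigma$-algebra — or more simply, split $S_nf = S_Lf + S_{n-L}f\circ T^L$ for a large fixed $L$, approximate $e^{\ic t S_nf/B_n} \approx e^{\ic t (S_{n-L}f\circ T^L)/B_n}$ (the discarded term $S_Lf/B_n\to 0$ uniformly since $B_n\to\infty$ and $L$ is fixed), so that $h_n$ is close in $L^1(m)$ to $e^{\ic t(S_{n-L}f\circ T^L)/B_n}\,\phi_1$, a product of a function of the coordinates $\geq L$ and the function $\phi_1$ of the coordinate $0$; then by mixing of $T^L$ this "decouples" from $\phi_2\circ T^n$ (which lives far in the future) up to an error controlled by $\norm{\phi_1}_2\norm{\phi_2}_2$ and the mixing rate — here one genuinely uses the $L^2$ hypothesis on $\phi_1,\phi_2$ to make the Cauchy–Schwarz/mixing estimates go through without integrability worries. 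After decoupling, $\int h_n\cdot(\phi_2\circ T^n)\dd m \approx \big(\int e^{\ic t S_{n-L}f\circ T^L/B_n}\,\phi_1\dd m\big)\big(\int\phi_2\,\dd m\big) + o(1) = \big(\int e^{\ic t S_{n-L}f/B_n}\,\phi_1\circ T^{?}\dd m\big)\cdot 1 + o(1)$, and applying Eagleson's Theorem (for $T$, via the absolutely continuous measure $\phi_1\dd m$, together with asymptotic invariance to discard the shift by $L$) gives $\to \E[e^{\ic t Z}]$. Finally $m_n(X)\to 1$ by mixing, so dividing by $m_n(X)$ changes nothing.

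\textbf{The main obstacle} is making the decoupling step rigorous: $e^{\ic t S_n f/B_n}$ is not a function of finitely many coordinates, so "it lives far in the future" is only true in an approximate sense, and one must quantify that $S_n f/B_n$ is insensitive, in distribution and uniformly enough, to a bounded time-shift and to truncation of a bounded initial segment — this is exactly the asymptotic-invariance-in-probability phenomenon underlying all Eagleson-type results, and transferring it through the mixing estimate (rather than just for a single test function) is where the real work lies. I expect the $L^2$ assumption on $\phi_1,\phi_2$ to be used precisely to control the error terms in this mixing/decoupling estimate via Cauchy–Schwarz, and the argument to mirror closely the structure of the proof of Eagleson's Theorem in \cite{zweimuller_mixing}.
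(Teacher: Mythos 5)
There is a genuine gap, and you have in fact located it yourself: the ``decoupling step'' you defer to the end is not a technical detail to be filled in, it is the entire content of the theorem, and the mechanism you propose for it does not work. After discarding the initial segment $S_L f$, the remaining factor $e^{\ic t S_{n-L}f\circ T^L/B_n}$ depends on the coordinates at times $L,L+1,\dotsc,n-1$; in particular it involves $f\circ T^{n-1}$ and is therefore at time-distance $1$ from $\phi_2\circ T^n$, no matter how $L$ is chosen. Discarding a terminal segment as well leaves a function depending on times in $[L,n-L)$, but this is still a \emph{sequence} of observables, not a fixed one, and plain mixing gives no uniformity over such sequences (one would need something like uniform mixing over tail $\sigma$-algebras, i.e.\ much more than the hypothesis). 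Similarly, your suggestion to apply the Eagleson/Zweim\"uller convergence with the test function $\psi=\phi_2\circ T^n$ (or its conditional expectation) fails because that convergence is for a \emph{fixed} integrable density, while here the density changes with $n$. So no step of the proposal actually separates $g(S_nf/B_n)$ from the conditioning at time $n$.

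The paper's proof avoids decoupling the Birkhoff sum from anything. It first reduces, via Eagleson's Theorem applied to the constant part of $\phi_2$, to the case $\int\phi_2\dd m=0$, where the claim becomes that the integral tends to $0$. Then it uses asymptotic invariance (the fact that $S_nf\circ T-S_nf=f\circ T^n-f=o(B_n)$ in probability, since $B_n\to\infty$) to replace the pair $\phi_1\cdot\phi_2\circ T^n$ by its Ces\`aro average $\frac1k\sum_{j=0}^{k-1}\phi_1\circ T^j\cdot\phi_2\circ T^{n+j}$ while leaving $g(S_nf/B_n)$ untouched. Expanding the square and applying mixing twice (once to separate the block at time $0$ from the block at time $n$, once to exploit $\int\phi_2=0$ within the time-$n$ block), this Ces\`aro average has small $L^2$ norm for $k$ large and then $n$ large; Cauchy--Schwarz against the bounded function $g(S_nf/B_n)$ finishes. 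This is where the $L^2$ (after truncation, $L^\infty$) hypothesis on $\phi_1,\phi_2$ enters --- not, as you guessed, in a Cauchy--Schwarz for a decoupling estimate. Your plan would need to be restructured around some such averaging device; as written, the central step would fail.
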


A sequence of real random variables $Z_n$ converges in distribution to $Z$ if
and only if, for any continuous bounded function $g$, then $\E(g(Z_n)) \to
\E(g(Z))$. By a density argument, it is even enough to restrict to bounded
Lipschitz functions. Writing an absolutely continuous probability measure
$m'$ as $\phi \dd m$ where $\phi$ is nonnegative and has integral $1$, then
Eagleson's Theorem can be restated as the convergence
  \begin{equation}
  \int g(S_n f/B_n) \phi \dd m \to \E(g(Z))) \int \phi \dd m.
  \end{equation}
By linearity, this even holds for any integrable function $\phi$.

What we have to do to prove Theorem~\ref{thm:mixing_Eagleson} is to prove the
same convergence, but when one multiplies by two functions $\phi_1$ and
$\phi_2 \circ T^n$, where $\phi_1,\phi_2 \in L^2$. More precisely, we have to
show that
\begin{equation}
\label{eq:mixing}
  \int \phi_1 \cdot g(S_n f/B_n) \cdot \phi_2\circ T^n \dd m\to \left(\int \phi_1 \dd m\right)
  \E(g(Z)) \left(\int \phi_2 \dd m\right).
\end{equation}

Without loss of generality, we can assume that $\phi_1$ and $\phi_2$ are
bounded, as a density argument readily gives the general conclusion. Let us
fix once and for all two such bounded functions. As in the discussion of
Eagleson's Theorem, we will in fact prove the convergence~\eqref{eq:mixing}
without assuming that the functions $\phi_1$ and $\phi_2$ are nonnegative,
although this condition is necessary for the probabilistic interpretation put
forward in the statement of Theorem~\ref{thm:mixing_Eagleson}. When $\phi_2$
is constant, the convergence~\eqref{eq:mixing} holds by Eagleson's Theorem.
Hence, we can without loss of generality replace $\phi_2$ with $\phi_2-\int
\phi_2 \dd m$, and assume that $\int \phi_2 \dd m= 0$.

The proof relies on the following lemma.
\begin{lem}
\label{lem:L2_bound} Assume that $T$ is mixing and $\phi_1,\phi_2$ are two
bounded functions with moreover $\int \phi_2 \dd m= 0$. Let $\epsilon>0$.
There exist $k$ and $N$ such that, for any $n\geq N$,
  \begin{equation}
  \norm*{ \frac{1}{k}\sum_{j=0}^{k-1} \phi_1 \circ T^j \cdot \phi_2 \circ T^{n+j} }_{L^2} \leq \epsilon.
  \end{equation}
\end{lem}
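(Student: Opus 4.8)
The plan is to expand the $L^2$ norm and exploit mixing together with the zero-average hypothesis on $\phi_2$. Squaring the average, one has
\begin{equation*}
  \norm*{ \frac{1}{k}\sum_{j=0}^{k-1} \phi_1 \circ T^j \cdot \phi_2 \circ T^{n+j} }_{L^2}^2
  = \frac{1}{k^2} \sum_{j,j'=0}^{k-1} \int \phi_1\circ T^j \cdot \phi_2 \circ T^{n+j} \cdot \phi_1 \circ T^{j'} \cdot \phi_2 \circ T^{n+j'} \dd m .
\end{equation*}
Writing $\ell = j'-j$ (say $j' \geq j$, the other case being symmetric) and using invariance of $m$ to shift by $T^j$, the summand becomes $\int \phi_1 \cdot \phi_2\circ T^n \cdot \phi_1\circ T^\ell \cdot \phi_2 \circ T^{n+\ell} \dd m$, which depends only on $n$ and $\ell$. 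So the whole expression is $\frac{1}{k^2}\sum_{\ell}(k-\abs{\ell})\, c_n(\ell)$ where $c_n(\ell) = \int \phi_1 \cdot \phi_1\circ T^\ell \cdot (\phi_2\circ T^n)\cdot(\phi_2\circ T^{n+\ell})\dd m$.

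The key point is now to control $c_n(\ell)$. First I would handle the ``diagonal-ish'' terms with $\ell$ small: there are only finitely many of them for fixed $k$, and for each fixed $\ell$, mixing gives $c_n(\ell) \to \pare[\big]{\int \phi_1 \cdot \phi_1\circ T^\ell \dd m}\cdot\pare[\big]{\int \phi_2 \cdot \phi_2 \circ T^\ell \dd m}$ as $n\to\infty$ (this is mixing along the sequence $T^n$, applied to the observable $\phi_1\cdot\phi_1\circ T^\ell$ against $\phi_2\cdot\phi_2\circ T^\ell$ precomposed with $T^n$; more precisely, set $\psi = \phi_1 \cdot \phi_1 \circ T^\ell$ and observe $c_n(\ell) = \int \psi \cdot (\phi_2 \cdot \phi_2\circ T^\ell)\circ T^n \dd m \to \int\psi\dd m \cdot \int \phi_2 \cdot \phi_2\circ T^\ell \dd m$ by mixing). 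Meanwhile $\phi_2 \cdot \phi_2 \circ T^\ell$ has $L^1$ norm $\leq \norm{\phi_2}_{L^2}^2$ and more importantly its average, $\int \phi_2 \cdot \phi_2 \circ T^\ell\dd m$, tends to $\pare{\int\phi_2\dd m}^2 = 0$ as $\ell\to\infty$, again by mixing; so by first choosing $\ell_0$ large we can make these limit values as small as we like for $\abs{\ell}\geq \ell_0$.

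Concretely the argument runs as follows. Given $\epsilon$, using $\abs{c_n(\ell)} \leq \norm{\phi_1}_\infty^2 \norm{\phi_2}_\infty^2 =: M$ uniformly, pick $k$ large enough that $\frac{1}{k^2}\sum_{\abs{\ell}<\ell_0}(k-\abs{\ell})M \leq \frac{1}{k}\cdot\ell_0 M$ is below $\epsilon/3$ — wait, better: choose $\ell_0$ first so that $\abs[\big]{\int\phi_2\cdot\phi_2\circ T^\ell\dd m}\leq\delta$ for all $\abs{\ell}\geq\ell_0$, with $\delta$ to be fixed; then $\abs{c_n(\ell)}\leq \norm{\phi_1}_\infty^2 \abs[\big]{\int \phi_2 \cdot\phi_2\circ T^\ell\dd m} + o_n(1) \leq \norm{\phi_1}_\infty^2\delta + o_n(1)$ for each such $\ell$ (using mixing to split the $\phi_1$-part off from the $\phi_2$-part: $c_n(\ell) = \int (\phi_1\cdot\phi_1\circ T^\ell)\cdot (\phi_2\cdot\phi_2\circ T^\ell)\circ T^n\dd m$, so its limit in $n$ equals $\int\phi_1\cdot\phi_1\circ T^\ell\dd m \cdot \int \phi_2\cdot\phi_2\circ T^\ell\dd m$, bounded by $\norm{\phi_1}_\infty^2\delta$). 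So the tail sum $\frac{1}{k^2}\sum_{\ell_0\leq\abs{\ell}<k}(k-\abs{\ell})\abs{c_n(\ell)}$ is at most $\norm{\phi_1}_\infty^2\delta + o_n(1)$. For the finitely many terms $\abs{\ell}<\ell_0$, bound $\abs{c_n(\ell)}\leq M$ and note their contribution is $\leq \frac{1}{k^2}\cdot 2\ell_0\cdot k\cdot M = \frac{2\ell_0 M}{k}$, which we make $\leq\epsilon^2/4$ by choosing $k$ large. Then choose $\delta$ so that $\norm{\phi_1}_\infty^2\delta\leq \epsilon^2/4$, and finally $N$ so that the $o_n(1)$ error (a finite sum of convergent mixing terms, for the fixed $k,\ell_0$) is $\leq\epsilon^2/4$ for $n\geq N$. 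Altogether the squared norm is $\leq \epsilon^2$ for $n\geq N$, giving the lemma.

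The main obstacle is the bookkeeping order of the quantifiers: $k$ must be chosen before $N$, but the tail estimate needs $\ell_0$ (hence $\delta$) chosen in a way that does not depend on $k$, while the diagonal estimate needs $k$ large relative to $\ell_0$. The resolution above — fix $\ell_0$ from the decay of $\int\phi_2\cdot\phi_2\circ T^\ell\dd m$, then fix $k$ large relative to $\ell_0 M/\epsilon^2$, then fix $N$ from mixing applied to the finitely many pairs indexed by $\abs{\ell}<k$ — is what makes it work, and is exactly the point where the hypotheses $\int\phi_2\dd m = 0$ and mixing are both essential (the former to kill the would-be main term, the latter to get both the splitting of $\phi_1$ from $\phi_2\circ T^n$ and the decay in $\ell$).
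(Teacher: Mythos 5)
Your argument is correct and is essentially the paper's proof: expand the square, use stationarity to reduce to the correlations $c_n(\ell)=\int \phi_1\cdot\phi_1\circ T^\ell\cdot(\phi_2\cdot\phi_2\circ T^\ell)\circ T^n\dd m$, apply mixing in $n$ to factor these, use mixing in $\ell$ together with $\int\phi_2\dd m=0$ to make the tail small, and absorb the finitely many small-$\ell$ terms into the $1/k$ factor. The order of choices (first the threshold $\ell_0$, then $k$, then $N$) matches the paper's choice of $A$, then $k$, then $n$ exactly.
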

\begin{proof}
Let us expand the square:
  \begin{multline*}
  \int \left(\frac{1}{k}\sum_{j=0}^{k-1} \phi_1 \circ T^j \cdot \phi_2 \circ T^{n+j} \right)^2 \dd m
  \\= \frac{1}{k} \int \phi_1^2\cdot (\phi_2 \circ T^n)^2 \dd m
  + \frac{2}{k} \sum_{j=1}^k (1-j/k) \int \phi_1 \cdot \phi_1 \circ T^j \cdot (\phi_2 \cdot \phi_2 \circ T^j)\circ T^n \dd m.
  \end{multline*}
The first term is bounded by $C/k$ for $C = \norm{\phi_1}_{L^\infty}^2
\norm{\phi_2}_{L^\infty}^2$. When $n$ tends to infinity (and $k$ is fixed),
every integral in the second term tends to the product of the integrals, by
mixing. Hence, it is bounded by $2\norm{\phi_1}_{L^\infty}^2 \left| \int
\phi_2 \cdot \phi_2 \circ T^j \right|$ if $n$ is large enough. Choose $A$
such that this term is $\leq \epsilon$ for $j\geq A$ (again by mixing, and
using the fact that $\int \phi_2 = 0$). If $n$ is large enough, we obtain a
bound
  \begin{equation}
  \frac{C}{k} + \frac{2}{k}\sum_{j=1}^{A-1} C + \frac{2}{k}\sum_{j=A}^{k-1} \epsilon
  \leq (C+2AC)/k + 2\epsilon.
  \end{equation}
This concludes the proof, first by taking $k$ large enough but fixed so that
$(C+2AC)/k \leq \epsilon$, and then $n$ large enough so that the above mixing
argument applies.
\end{proof}

\begin{lem}
\label{lem:invariance} Assume that $B_n \to \infty$. We have
  \begin{equation}
   \int \phi_1 \cdot g(S_n f/B_n) \cdot \phi_2\circ T^n \dd m - \int \phi_1\circ T \cdot g(S_n f/B_n) \cdot \phi_2\circ T^{n+1} \dd m\to 0.
  \end{equation}
\end{lem}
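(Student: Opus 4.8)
The plan is to transfer the second integral to the first by a change of variables exploiting the invariance of $m$, and then to control the resulting error using the Lipschitz continuity of $g$ together with $B_n\to\infty$.

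First I would record the cocycle identity $S_n f(Tx)=\sum_{j=1}^{n}f(T^j x)=S_n f(x)-f(x)+f(T^n x)$. Applying the invariance relation $\int h\dd m=\int h\circ T\dd m$ to the bounded function $h=\phi_1\cdot g(S_n f/B_n)\cdot \phi_2\circ T^n$ and evaluating $h\circ T$ by means of this identity, the first integral in the statement equals
\begin{equation*}
  \int \phi_1\circ T\cdot g\pare*{\frac{S_n f-f+f\circ T^n}{B_n}}\cdot\phi_2\circ T^{n+1}\dd m .
\end{equation*}
Subtracting the second integral in the statement, the quantity that must be shown to tend to $0$ becomes
\begin{equation*}
  \int \phi_1\circ T\cdot\left[g\pare*{\frac{S_n f-f+f\circ T^n}{B_n}}-g\pare*{\frac{S_n f}{B_n}}\right]\cdot\phi_2\circ T^{n+1}\dd m .
\end{equation*}

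Since $\phi_1$ and $\phi_2$ are bounded, the absolute value of this last integral is at most $\norm{\phi_1}_{L^\infty}\norm{\phi_2}_{L^\infty}\int\abs{g(a_n)-g(b_n)}\dd m$, where I abbreviate $a_n=(S_n f-f+f\circ T^n)/B_n$ and $b_n=S_n f/B_n$, so that $\abs{a_n-b_n}\leq(\abs{f}+\abs{f\circ T^n})/B_n$. Since $g$ is bounded and Lipschitz, I would split $X$ into $E_n=\{\abs{f}+\abs{f\circ T^n}\leq\sqrt{B_n}\}$ and its complement: on $E_n$ one has $\abs{g(a_n)-g(b_n)}\leq\operatorname{Lip}(g)/\sqrt{B_n}\to 0$, while on $E_n^c$ one has $\abs{g(a_n)-g(b_n)}\leq 2\norm{g}_{L^\infty}$ and $m(E_n^c)\leq 2\,m(\abs{f}>\sqrt{B_n}/2)\to 0$, the latter by invariance of $m$ together with $B_n\to\infty$ and the finiteness of $f$ almost everywhere. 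Hence $\int\abs{g(a_n)-g(b_n)}\dd m\to 0$, which gives the lemma.

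The argument is essentially routine; the one point demanding a little care is that $f$ is not assumed integrable, so the naive estimate by $\int(\abs{f}+\abs{f\circ T^n})/B_n\dd m$ is not available and the truncation onto $E_n$ is used instead. If one wishes to invoke only that $g$ is bounded and continuous rather than Lipschitz, the same splitting works, replacing the Lipschitz bound on $E_n$ by the uniform continuity of $g$ on a large compact interval together with the tightness of the family $(S_n f/B_n)_n$ coming from its convergence in distribution.
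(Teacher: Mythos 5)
Your proof is correct and follows essentially the same route as the paper: apply the invariance of $m$ to rewrite the first integral, use the cocycle identity $S_n f\circ T - S_n f = f\circ T^n - f$, and control the difference via the boundedness and Lipschitz continuity of $g$ together with $B_n\to\infty$. The only cosmetic difference is that the paper packages the final step as $\int \min(1,\abs{f}/B_n)\dd m \to 0$ by dominated convergence, whereas you carry out the equivalent truncation on $E_n$ explicitly.
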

\begin{proof}
As the measure is invariant, the difference between these two integrals is
equal to
  \begin{equation}
  \int \phi_1\circ T \cdot (g(S_n f\circ T/B_n) -g(S_nf/B_n)) \cdot \phi_2 \circ T^{n+1}.
  \end{equation}
Since $g$ is bounded and Lipschitz continuous, and $\phi_1$ and $\phi_2$ are
bounded, this is bounded by
\begin{multline*}
  C \int \min(1, \abs{S_n f \circ T - S_n f}/B_n) \dd m
  = C \int \min(1, \abs{f \circ T^n- f}/B_n) \dd m
  \\
  \leq C \int \min(1, \abs{f \circ T^n}/B_n) \dd m + C \int \min(1, \abs{f}/B_n) \dd m
  = 2C \int \min(1, \abs{f}/B_n) \dd m,
\end{multline*}
where we used the invariance of the measure for the last equality. This bound
tends to $0$ when $n$ tends to infinity, as $B_n \to \infty$.
\end{proof}

\begin{proof}[Proof of Theorem~\ref{thm:mixing_Eagleson}]
We prove the convergence~\eqref{eq:mixing} when $\phi_1$ and $\phi_2$ are
bounded and $\phi_2$ has zero average. Lemma~\ref{lem:invariance} (iterated
several times) ensures that, for any given $k$,
  \begin{equation}
  \int \phi_1 \cdot g(S_n f/B_n) \cdot \phi_2\circ T^n \dd m =
  \int \left( \frac{1}{k} \sum_{j=0}^{k-1} \phi_1 \circ T^j \cdot \phi_2 \circ T^{n+j} \right) g(S_n f /B_n) \dd m+o_n(1).
  \end{equation}
The integral on the right hand side is bounded by a constant multiple of the
$L^2$ norm of $\frac{1}{k} \sum_{j=0}^{k-1} \phi_1 \circ T^j \cdot \phi_2
\circ T^{n+j}$. If $k$ is fixed but large enough, this norm is bounded by
$\epsilon$ for large enough $n$, thanks to Lemma~\ref{lem:L2_bound}.
Therefore, $\int \phi_1 \cdot g(S_n f/B_n) \cdot \phi_2\circ T^n \dd m$ is
bounded in absolute value by $2\epsilon$. This concludes the proof.
\end{proof}

We can generalize the result as follows. Assume that $T$ is mixing of order
$p$. Let $F_n$ be a sequence of functions, taking values in a metric space
$M$, which is asymptotically invariant in the sense that $d(F_n, F_n \circ
T)$ tends to $0$ in probability, and such that $F_n$ converges in
distribution towards a random variable $Z$ on $M$. Then, for any bounded
functions $\phi_1,\dotsc,\phi_p$, for any $g:M\to \R$ Lipschitz and bounded,
  \begin{equation}
  \int \prod \phi_i \circ T^{n_i} \cdot g(F_n) \dd m
  \end{equation}
converges to $\prod (\int \phi_i) \cdot \E(g(Z))$, when $n$ and all the
$n_{i+1}-n_i$ tend to infinity. More formally, for any $\epsilon>0$, there
exists $N$ such that, for any $n$ and $n_1<\dotsb<n_p$ with $n\geq N$ and
$n_{i+1}-n_i \geq N$, then the above integral is within $\epsilon$ of $\prod
(\int \phi_i) \cdot \E(g(Z))$. This asserts that one can condition on the
position of the particle at $p$ times if these times are enough separated,
and still get the same limiting behavior.

The proof is the same as for Theorem~\ref{thm:mixing_Eagleson}. First, we use
order $p$ mixing to see that the sum $\frac{1}{k} \sum_{j=0}^{k-1} \prod
\phi_i \circ T^{n_i +j}$ is close in $L^2$ norm to $\prod (\int \phi_i)$ if
$k$ is large, and the $n_{i+1}-n_i$ are even larger. Then, one concludes
exactly as above.

\bibliography{biblio}
\bibliographystyle{amsalpha}

\end{document}